\documentclass[12pt]{article}
\usepackage[colorlinks,
            linkcolor=blue,
            anchorcolor=green,
            citecolor=blue
            ]{hyperref}
\usepackage{mathrsfs,amssymb,amsfonts}
\usepackage{amsmath,amssymb,latexsym,color}
\usepackage[mathscr]{eucal}
\usepackage[numbers,sort&compress]{natbib}
\usepackage{graphicx}
\usepackage{lipsum}
\def\thefootnote{\fnsymbol{footnote}}
\newtheorem{thm}{Theorem}[section]

\newtheorem{lemma}[thm]{Lemma}

\newtheorem{example}[thm]{Example}

\newtheorem{problem}[thm]{Problem}
\newtheorem{remark}[thm]{Remark}
\newtheorem{Notation}[thm]{Notation}

\newtheorem{con}[thm]{Conjecture}

\newcommand{\proof}{{\it Proof.\quad}}
\newcommand{\qed}{\hfill\Box\medskip}
\renewcommand{\thefootnote}{\arabic{footnote}}
\newcommand{\di}{{\rm d}}
\newcommand{\dm}{{\rm dim_m}}
\usepackage{CJK}
\begin{document}
\begin{CJK*}{GBK}{song}
\renewcommand{\abovewithdelims}[2]{
\genfrac{[}{]}{0pt}{}{#1}{#2}}

\title{\bf Graphs attaining an upper bound on the mixed metric dimension}

\author{{Shi Chen and Xuanlong Ma\footnote{Corresponding author}}
\medskip
\\
{\small\em School of Science, Xi'an Petroleum University, Xi'an 710065, P. R. China}
\medskip
}

 \date{}
 \maketitle
\newcommand\blfootnote[1]{%
\begingroup
\renewcommand\thefootnote{}\footnote{#1}%
\addtocounter{footnote}{-1}%
\endgroup
}
\begin{abstract}
Given a graph $G$, we show that the mixed metric dimension of $G$ is exactly $\ell(G)+2c(G)$
if and only if $G$ is either a cactus graph in which every cycle has precisely one vertex of degree at least $3$, or a
balanced $\Theta$-graph, where $\ell(G)$ and $c(G)$ denote  the number of leaves and
the cyclomatic number of $G$, respectively.
This provides an affirmative answer to a conjecture proposed
by Sedlar and \v{S}krekovski (2021).

\medskip
\noindent {\em Key words:} Mixed metric dimension; Cyclomatic number; Cactus graph; $\Theta$-graph

\medskip
\noindent {\em 2010 MSC:} 05C25; 05C12
\end{abstract}

\blfootnote{{\em E-mail addresses:} nuoyoushi@gmail.com (Chen), xuanlma@xsyu.edu.cn (Ma)}

\section{Introduction}\label{pre}
All graphs considered in this paper are finite, simple, and connected.
We always use $G$ to denote a graph.
Denote by $V(G)$ and $E(G)$ the vertex set and edge set of $G$, respectively.
The {\em minimum degree} of a graph $G$, denoted by $\delta(G)$, is the smallest integer $k$ such that there exists a vertex with degree $k$ in this graph.
For two vertices $u$ and $v$ of a graph $G$, the distance between
$u$ and $v$ is denoted by $\di_G(u,v)$, or simply by $\di(u,v)$ when no confusion can arise.
For distinct $x,y\in V(G)$, we say a vertex $v\in V(G)$
{\em distinguishes} $x$ and $y$ whenever
$
\di(v,x)\ne \di(v,y).
$
A subset $S\subseteq V(G)$ is called a {\em metric generator} of $G$ if for every
pair of distinct vertices $x,y\in V(G)$, there exists a vertex in $S$ such that this vertex distinguishes $x$ and $y$.
A metric generator of $G$ with minimum cardinality is called
a {\em metric basis}
of $G$, and the cardinality of a metric basis is referred to as the {\em metric dimension} of $G$.

In the 1970s,
the concept of metric dimension was introduced independently by Harary and Melter \cite{Ham}, and Slater \cite{Sl}.
Their aim is to distinguish vertices of a graph using distances.
The metric dimension of a graph has a wide range of applications, including robot navigation in networks, chemistry, pattern recognition, and image processing, and so on. See the survey \cite{Ti} for more results and applications of the metric dimension.
In 2018,
as a further variation of the classical notion of a metric generator, Kelenc et al. \cite{KT} introduced
the concept of an edge metric generator, which is
to distinguish the edges of a graph by means of distances from a prescribed set of vertices.
For a vertex $v\in V(G)$ and an edge $e=\{u,w\}\in E(G)$, the distance between $v$ and $e$ is defined by
$$
\di(v,e)=\min\{\di(v,u),\di(v,w)\}.
$$
A vertex $x\in V(G)$ is said to distinguish two distinct edges $e_1,e_2\in E(G)$ if
$
\di(x,e_1)\ne \di(x,e_2).
$
A set $S\subseteq V(G)$ is called an {\em edge metric generator} for $G$ provided that every pair of distinct edges of $G$ is distinguished by some vertex in $S$.
An edge metric generator of minimum cardinality is called an {\em edge metric basis} of $G$, and its cardinality is called the {\em edge metric dimension} of $G$.

For a graph $G$, let
$$
O(G)=V(G)\cup E(G).
$$
We say a vertex $v$ of $(G)$ distinguishes two distinct elements $x,y\in O(G)$ (vertices or edges) if
$
\di(v,x)\ne \di(v,y).
$
For a subset $S$ of $V(G)$, we say $S$ is a
{\em mixed metric generator} of $G$ provided that, for every pair of distinct elements $x,y\in O(G)$, there exists a vertex in $S$ such that this vertex distinguishes $x$ and $y$.
The cardinality of a mixed metric generator with minimum size, denoted by $\dm(G)$, is called the {\em mixed metric dimension} of $G$.
Kelenc et al. \cite{Kek} first introduced the concept of a mixed metric dimension.

In \cite{Kek},
the authors determined the mixed metric dimension of several families of graphs and established an upper bound in terms of the girth. They also proved that computing the mixed metric dimension of a general graph is NP-hard.
A {\em leaf} of $G$ is a vertex of degree $1$.
Denote by $\ell(G)$
the number of all leaves of $G$.
The {\em cyclomatic number} of $G$, denoted by $c(G)$, is
the number of independent cycles of $G$. It is well known that $c(G)=|E(G)|-|V(G)|+1$.
In \cite{Sed21},
Sedlar and \v{S}krekovski obtained the exact value of the mixed metric dimension of a unicyclic graph and they proposed the following conjecture:

\begin{con}\label{conj}{\rm (\cite[Conjecture 13]{Sed21})}
Let $G$ be a connected graph which is not a cycle.
Then $\dm(G)\le \ell(G)+2c(G)$.
\end{con}

In \cite{Sed21}, the authors showed that
Conjecture~\ref{conj} holds for trees and $3$-connected
graphs.
A {\em Theta graph}
(or $\Theta$-graph) is a graph with exactly two vertices of degree $3$ and all others of degree $2$.
Let $H$ be a $\Theta$-graph with distinct vertices $u,v$ of degree $3$.
Then $H$ is called {\em balanced} provided that the lengths of all three paths connecting $u$ and $v$ differ by at most $1$; Otherwise, $H$ is called {\em unbalanced}.
In \cite{Sed}, the authors proved that Conjecture~\ref{conj} holds for all $\Theta$-graph. In 2025,
Chakraborty et al. \cite{Ch} proved that Conjecture~\ref{conj} is valid for a connected graph which is not a tree such that $\ell(G)\ge 1$, and a connected graph $G$ with $\ell(G)=0$ and has a cut-vertex.
In \cite{Hua}, the authors gave some graphs satisfying Conjecture~\ref{conj}. Recently, Chen and Ma \cite{C} showed  that for a $2$-connected graph $G$ which is not a cycle, the mixed metric dimension of $G$ is at most $2c(G)$.
As an immediate application, Conjecture~\ref{conj} was proved.

If $G$ is a cycle, then by
\cite[Theorem~3.8 and Proposition~4.1]{Kek} we have $\dm(G)=3$. As a result, $\dm(G)=\ell(G)+2c(G)$ is not valid for a cycle. Moreover,
by \cite[Corollary 8]{Sed},
$\dm(G)=\ell(G)+2c(G)$ is valid for a
balanced $\Theta$-graph $G$.
In the concluding section of \cite{Sed}, the authors proposed the following conjecture:

\begin{con}\label{conj-2}{\rm (\cite[Conjecture 12]{Sed})}
Let $G$ be a graph. Then
\begin{equation}\label{th-eq-1}
\dm(G)=\ell(G)+2c(G)
\end{equation}
if and only if $G$ is either a cactus graph in which every cycle has
precisely one vertex of degree at least $3$ or a
balanced $\Theta$-graph.
\end{con}

This paper will provide an affirmative answer to Conjecture~\ref{conj-2}.

\section{Preliminaries}

In order to prove Conjecture~\ref{conj-2}, we will establish several lemmas in this section.

\begin{lemma}\label{lem-1}
Let $G$ be a $2$-connected graph with $c(G)=2$. Then $G$ is a $\Theta$-graph.
\end{lemma}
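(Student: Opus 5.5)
I will use the ear decomposition of $2$-connected graphs (Whitney). Since $G$ is $2$-connected, it contains a cycle $C_0$. The graph can then be built as $G=C_0\cup P_1\cup\cdots\cup P_k$, where each $P_i$ is an ear. An ear is a path, possibly a single edge, whose two distinct endpoints lie in the graph built so far and whose internal vertices are new.

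Each ear with $m$ internal vertices adds $m$ vertices and $m+1$ edges. So each ear raises the cyclomatic number $|E|-|V|+1$ by exactly one. Since $c(C_0)=1$ and $c(G)=2$, there is exactly one ear. Hence $G=C_0\cup P$, where $P$ is a path joining two distinct vertices $u,v$ of $C_0$ and is internally disjoint from $C_0$.

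Next I identify the structure. The vertices $u$ and $v$ split $C_0$ into two internally disjoint $u$--$v$ paths $Q_1,Q_2$, each of length at least $1$. Together with $P$, this gives three internally disjoint $u$--$v$ paths whose union is $G$. Every internal vertex of these paths has degree $2$ in $G$, while $u$ and $v$ each have degree $3$. So $G$ has exactly two vertices of degree $3$ and all others of degree $2$, which means $G$ is a $\Theta$-graph.

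The main point needing care is simplicity. Because $G$ is simple, at most one of the three $u$--$v$ paths can be a single edge $uv$. This guarantees that the three paths are genuinely distinct, so the degree count above is exact and no multi-edge arises.

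I expect no real obstacle. The only delicate step is justifying the ear decomposition and the count $c=1+k$. If one prefers not to cite Whitney, an alternative is to argue directly from $|E|=|V|+1$: a $2$-connected graph has minimum degree at least $2$, so the degree sum $2|V|+2$ forces either one vertex of degree $4$ or two vertices of degree $3$. In the degree-$4$ case, $G$ would be two cycles sharing that single vertex, which makes it a cut-vertex and contradicts $2$-connectivity. In the other case, suppressing the degree-$2$ vertices yields a multigraph on the two degree-$3$ vertices with three edges between them, which is exactly the $\Theta$ structure.
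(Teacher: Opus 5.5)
Your proof is correct, but your main argument takes a different route from the paper's. The paper's proof is essentially your fallback sketch. From $\delta(G)\ge 2$ and $\sum_v(\deg v-2)=2c(G)-2=2$, either one vertex has degree $4$, or exactly two vertices have degree $3$. The degree-$4$ vertex is excluded because deleting it leaves two disjoint paths, so it would be a cut-vertex. The paper then stops, since its definition of a $\Theta$-graph is just this degree condition.

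Your ear-decomposition argument cites Whitney's theorem instead. In return it directly gives the structure the paper actually uses later: three internally disjoint $u$--$v$ paths, as in Lemma~\ref{lem-2} and in the definition of ``balanced''. It also automatically excludes the ``handcuff'' graph (two cycles joined by a path of length at least $1$). That graph also has $c=2$ and exactly two vertices of degree $3$, and the paper's degree count never explicitly rules it out. The cost is dependence on an external structural theorem, whereas the degree-sum argument is self-contained.

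One small point on your fallback sketch. When you suppress the degree-$2$ vertices, the resulting cubic multigraph on two vertices need not be three parallel edges. It can also be two loops joined by an edge, which is the handcuff. You must invoke $2$-connectivity there as well to exclude it, not only in the degree-$4$ case.
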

\proof
Note that $G$ is not a cycle and $\delta(G)\ge 2$. Then we have
$$
\sum_{v\in V(G)}(\deg(v)-2)=2|E(G)|-2|V(G)|=2c(G)-2=2.
$$
We conclude that the degree of a vertex of $G$ is at most $4$.

Suppose, for a contradiction, that $G$ has a vertex $u$ with degree $4$. Then all other vertices of $G$ have  degree $2$. Let $x,y,z,w$ be the four vertices adjacent to $u$. If we delete $u$ in $G$, then any of $x,y,z,w$ has degree $1$.
It follows that the subgraph of $G$ obtained by delete $u$ is the union of two disjoint paths. Thus, $u$ is a cut-vertex of $G$, this contradicts that $G$ is $2$-connected.

Thus, $G$ has precisely two vertices with degree $3$, and all other vertices have degree $2$. This means that
$G$ is a $\Theta$-graph, as desired.
$\qed$

\begin{lemma}\label{lem-2}
Let $G$ be a balanced $\Theta$-graph and let $a,b$ be two distinct vertices of $G$.
Then there exists a mixed metric generator $\mathcal{R}$ of $G$
such that
$$
|\mathcal{R}|=4,~~a\in \mathcal{R},~~\di(r,a)\ge \di(r,b) \text{ for some $r\in \mathcal{R}$}.
$$
\end{lemma}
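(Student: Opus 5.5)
We need a mixed metric generator of size 4 containing $a$, and some $r$ in it with $\di(r,a)\ge\di(r,b)$. Note that $r=a$ never works for $a\ne b$, since $\di(a,a)=0<\di(a,b)$. So we need another vertex of $\mathcal{R}$ at least as far from $a$ as from $b$.

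Our plan is to start from the known structure of mixed metric bases of balanced $\Theta$-graphs (\cite[Corollary 8]{Sed} gives $\dm(G)=4$), and exploit the large symmetry group of $G$ to move such a basis so that it contains $a$. Let $u,v$ be the two vertices of degree $3$ and $P_1,P_2,P_3$ the three $u$--$v$ paths, of lengths $k\le k_2\le k_3\le k+1$. First we write down an explicit family of mixed metric generators of size $4$. The natural candidate is analogous to the cycle case, where three well-spread vertices suffice. We take one vertex in the interior of each $P_i$ near its midpoint, plus one further vertex; alternatively we use $u$ or $v$ together with suitable interior vertices.

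We then verify the generator property by a distance computation. Balancedness guarantees that every cycle $P_i\cup P_j$ is almost isometric, so distances are computable by the cycle formula, and pairs of elements (vertex--vertex, vertex--edge, edge--edge) are separated case by case. Next we show the family is rich enough, up to the automorphisms of $G$ (swapping $u,v$ and permuting paths of equal length), that for any vertex $a$ some member contains $a$. We split into $a\in\{u,v\}$ and $a$ interior to some $P_i$, at distance $t$ from $u$. In the latter case we slide the other three vertices accordingly, namely to roughly antipodal positions relative to $a$ on the cycles through $P_i$.

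Finally, for the condition on $b$, we pick $r\in\mathcal{R}\setminus\{a\}$ among the remaining three vertices. Since these are spread around $G$, for any $b\ne a$ at least one of them is not strictly closer to $a$ than to $b$. Concretely, take $r$ to be a vertex of $\mathcal{R}$ roughly antipodal to $a$; then $\di(r,a)$ is near the maximum eccentricity, so $\di(r,a)\ge\di(r,b)$ for most $b$. Any remaining $b$, those lying near $r$'s antipode on the other side, are handled by a different element of $\mathcal{R}$. If the fixed generator fails for a particular $b$, we have freedom to choose the other three vertices depending on $b$ as well, for instance placing one of them on a geodesic extension beyond $b$ from $a$.

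The main obstacle is making the generator property hold uniformly for all positions of $a$ while keeping enough flexibility. We expect to handle it by reducing via symmetry to $a$ on $P_1\cup\{u\}$ with $\di(u,a)\le\di(v,a)$. We then do the explicit verification using parity cases of $k,k_2,k_3$; edge pairs incident to $u$ or $v$ are the delicate ones.
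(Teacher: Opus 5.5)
Your proposal is a plan rather than a proof, and the parts it defers are exactly the content of the lemma.

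You never fix an actual $4$-set containing $a$. Phrases like ``one vertex near each midpoint plus one further vertex'' and ``slide the other three to roughly antipodal positions'' do not determine a set, so the mixed-generator property is never verified. The condition on $b$ is argued only for ``most $b$''. The remaining $b$ are ``handled by a different element'' or by re-choosing $\mathcal{R}$, and none of this is substantiated.

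The symmetry reduction also fails as stated. Automorphisms of a $\Theta$-graph only swap the two branch vertices and permute paths of equal length. So for lengths $(q,q,q+1)$ or $(q,q+1,q+1)$ you cannot move $a$ from a long path onto the shortest path $P_1$.

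Three concrete ingredients are missing.

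\emph{A template that can contain an arbitrary $a$.} Let $x,y$ be the branch vertices (your $u,v$). The set $\{x,y,u,v\}$ is a mixed metric generator whenever $u,v$ are interior vertices of two different paths and every path of length $q+1$ contains one of them. This holds because on $P_i$ the vertex $v_{i,t}$ has distances $t$ and $l_i-t$ to $x$ and $y$, and the edge $v_{i,t}v_{i,t+1}$ has distances $t$ and $l_i-t-1$. So $x,y$ separate everything on a path of length $q$. A marked vertex on $P_i$ resolves the two extra coincidences on a long path. It is also strictly closer to an element of $P_i$ than to an element of another path with the same distances to $x$ and $y$.

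\emph{A uniform argument for the $b$-condition, with no case analysis on $b$.} Every vertex $w$ satisfies $\di(x,w)+\di(y,w)\in\{q,q+1\}$. Hence $a$ cannot be strictly closer than $b$ to both $x$ and $y$. So one of $x,y$ serves as $r$ whenever a generator of the form $\{x,y,a,\cdot\}$ exists, which covers $a\in\{x,y\}$ and every $a$ that can be a marked vertex.

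\emph{The exceptional configuration, which your plan does not anticipate.} Write $a_i,b_i,c_i$ for the vertices at distance $i$ from $x$ along $P_1,P_2,P_3$. Suppose $P_2,P_3$ both have length $q+1$ and $a=a_t$ is interior to the short path $P_1$. Then \emph{no} set $\{x,y,a,w\}$ is a generator. The pair $x,\{x,b_1\}$ is separated only by interior vertices of $P_2$, and the pair $x,\{x,c_1\}$ only by interior vertices of $P_3$, and one $w$ cannot do both. Here you must give up $y$. The paper takes $\{x,a_t,b_1,c_q\}$, verifies it with an explicit distance table, and then shows that $r=x$ or $r=c_q$ gives $\di(r,a)\ge\di(r,b)$ for every $b\neq a$.
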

\proof
Let $x,y$ be the two vertices of degree $3$, and let these three paths connecting $x$ and $y$ be
$$
P_i=(v_{i,0},v_{i,1},v_{i,2},\ldots,v_{i,l_i}),~~
x=v_{i,0},~~y=v_{i,l_i},
$$
where $1\le i \le 3$. Let $q=\min\{l_1,l_2,l_3\}$. Then
$l_i\in \{q,q+1\}$ for any $1\le i \le 3$.
Note that among the three paths $P_1,P_2,P_3$, there are at most two paths of length $q+1$.

\medskip
\noindent {\bf Claim A.}
Take $u \in V(P_k)\setminus\{x,y\}$ and $v \in V(P_l)\setminus\{x,y\}$ from two distinct paths $P_k, P_l$, and if the path $P_t$ has length $q+1$, then one of $u,v$ must lie on $P_t$. Then $\{x,y,u,v\}$ is a mixed metric generator of $G$.
\medskip

We refer to the above $u,v$ as marked vertices.
In the following, denote by $[t,t]_i$ and $[t,t+1]_i$ the vertex $v_{i,t}$ and edge $\{v_{i,t},v_{i,t+1}\}$ in $P_i$, respectively. Let $\varepsilon\in \{0,1\}$. Then for any $[t,t+\varepsilon]_i\in O(P_i)\setminus\{x,y\}$, we have that
\begin{equation}\label{eq-1}
\di(x,[t,t+\varepsilon]_i)=t,~~
\di(y,[t,t+\varepsilon]_i)=l_i-t-\varepsilon.
\end{equation}
By \eqref{eq-1}, if $P_i$ has length $q$, then
any pair of distinct elements of $O(P_i)$ can be  distinguished by one of $x,y$.
Suppose that $P_i$ has length $q+1$. Let $u$ be the marked vertex of $P_i$. Then $u$ can distinguish not only $x$ and $[0,1]_i$, but also $y$ and $[l_i-1,l_i]_i$. By \eqref{eq-1} again,
every two distinct elements of $O(P_i)$ can be distinguished by one of $x,y,u$.

In the following, we consider two elements $z,z'$ lying on distinct paths. We may assume that every of $x,y$ can not distinguish $z,z'$. In view of \eqref{eq-1}, let
$$
z=[t,t+\varepsilon]_i,~~z'=[t,t+\varepsilon']_j,~~
i\ne j,
$$
where $\varepsilon'\in \{0,1\}$. Since $\di(y,z)=\di(y,z')$, we have that
\begin{equation}\label{eq-2}
l_i-\varepsilon=l_j-\varepsilon'.
\end{equation}
By the choice of a marked vertex, at least one marked vertex comes from $P_i$ or $P_j$. Without loss of generality, let $u=v_{i,p}\in O(P_i)$ be a  marked vertex. We deduce that
$$
\di(u,z)=h=\min\{|p-t|,|p-t-\varepsilon|\}
$$
and
$$
\di(u,z')=\min\{p+t,l_i-p+l_j-t-\varepsilon'\}.
$$
If $p\le t$, then $h=t-p$, which implies that
$$
(p+t)-h = 2p>0,~~(l_i-p+l_j-t-\varepsilon')-h
=2l_i-2t-\varepsilon>0~(\text{by \eqref{eq-2}}).
$$
If $p\ge t+1$, then $h=p-t-\varepsilon$, this forces that
$$
(p+t)-h = 2t+\varepsilon>0,~~(l_i-p+l_j-t-\varepsilon')-h
=2(l_i-p)>0~(\text{by \eqref{eq-2}}).
$$
Combining the above two cases, we conclude that
$\di(u,z')>\di(u,z)$. It follows that $u$ distinguishes $z,z'$. As a result, the set of $x,y$ and all marked vertices is a mixed metric generator of $G$, which proves Claim A.

Now, suppose that $a=x$.
Take $\mathcal{R}=\{x,y,u,v\}$ as in Claim A.
Then for any $b\in V(G)\setminus\{a\}$,
we infer that $\di(y,a)\ge \di(y,b)$, which concludes the proof of this lemma. Similarly, we also conclude our proof if $a=y$. Thus, in the following, let $a\notin \{x,y\}$.

Suppose that $a$ can serve as a marked vertex. Let $u$ be the other marked vertex. Then by Claim A,
$\{x,y,a,u\}$ is a mixed metric generator of $G$.
Now for any $v\in V(G)$, it follows that
\begin{equation}\label{eq-3}
\di(x,v)+\di(y,v)\in \{q,q+1\}.
\end{equation}
If both $\di(x,a)<\di(x,b)$ and $\di(y,a)<\di(y,b)$ hold, then
$$\di(x,b)+\di(y,b)\ge \di(x,a)+\di(y,a)+2,$$
which is impossible as \eqref{eq-3}.
We conclude that either $\di(x,a)\ge \di(x,b)$ or $\di(y,a)\ge \di(y,b)$, which proves our lemma.

Suppose that $a$ cannot serve as a marked vertex. Then it must be that among the three paths $P_1,P_2,P_3$, there are precisely two paths of length $q+1$, say $P_2$ and $P_3$, and
$a$ is an internal vertex of $P_1$. For convenience, now let
$$
P_1=(a_{0},a_{1},\ldots,a_{q}),~~
P_2=(b_{0},b_{1},\ldots,b_{q+1}),~~
P_3=(c_{0},c_{1},\ldots,c_{q+1}),
$$
where $x=a_{0}=b_{0}=c_{0}$ and $y=a_{q}=b_{q+1}=c_{q+1}$. Write $a=a_t$ for some $1\le t \le q-1$.

\medskip
\noindent {\bf Claim B.} $\{x,a_t,b_1,c_q\}$ is a mixed metric generator of $G$.
\medskip

For any two elements $z,z'$ of $O(G)$,
if $\di(x,z)\ne \di(x,z')$, then $x$ distinguishes them.
As a result, we next consider the elements at distance
$i$ from $x$.
For $1\le i \le q-1$,
we obtain Table~\ref{tab-1},
where
$\alpha_i=\min\{|t-i|,|t-i-1|\}$, $\mu_i=\min\{t+i,2q-t-i+1\}$, and
$\nu_i=\min\{t+i,2q-t-i\}$.
\begin{table}[h]\label{tab-1}
\centering
\caption{Distance table for $1\le i \le q-1$}
\medskip
\begin{tabular}{cccc}
\hline
$\di(\cdot,\cdot)$ & $a_t$ & $b_1$ & $c_q$ \\
\hline
$a_i$ &       $|t-i|$ & $i+1$ & $q-i+1$ \\
$\{a_i,a_{i+1}\}$ & $\alpha_i$ & $i+1$ & $q-i$ \\
$b_i$ &       $\mu_i$ & $i-1$ & $q-i+2$ \\
$\{b_i,b_{i+1}\}$ & $\nu_i$ & $i-1$ & $q-i+1$ \\
$c_i$ &       $\mu_i$ & $i+1$ & $q-i$ \\
$\{c_i,c_{i+1}\}$ & $\nu_i$ & $i+1$ & $q-i-1$ \\
\hline
\end{tabular}
\end{table}
Now by Table~\ref{tab-1}, with the exception of the  pair $c_i, \{a_i,a_{i+1}\}$, every pair of elements of $\{a_i,b_i,c_i,\{a_i,a_{i+1}\},\{b_i,b_{i+1}\},
\{c_i,c_{i+1}\}\}$ is distinguished by one element of $b_1,c_q$. If $t\le i$, then
$$
\alpha_i=i-t,~~t+i-\alpha_i=2t, ~~ 2q-t-i+1-\alpha_i=2q-2i+1>0.
$$
If $t\ge i+1$, then
$$
\alpha_i=t-i-1,~~t+i-\alpha_i=2i+1, ~~ 2q-t-i+1-\alpha_i=2q-2t+2>0.
$$
It follows that $\alpha_i<\mu_i$, and so
$c_i$ and $\{a_i,a_{i+1}\}$ can be distinguished by $a_t$.

Now, we consider the elements at distance
$0$ from $x$. There are only four such elements   $x,\{x,a_1\},\{x,b_1\},\{x,c_1\}$.
Note that the distance vectors from
the four elements $x,\{x,a_1\},\{x,b_1\},\{x,c_1\}$ to $(a_t,b_1,c_q)$ are respectively
$$
(t,1,q),~~(t-1,1,q),~~(t,0,q),~~(t,1,q-1).
$$
It follows that every two of $x,\{x,a_1\},\{x,b_1\},\{x,c_1\}$ can be distinguished by some element in $\{a_t,b_1,c_q\}$.

Finally, we consider the elements at distance
$q$ from $x$. They are
\begin{equation}\label{eq-4}
y,~~b_q,~~c_q,~~\{b_q,y\},~~\{c_q,y\}.
\end{equation}
Now the distance vectors from
the five elements in \eqref{eq-4} to $(a_t,b_1,c_q)$ are
$$
(q-t,q,1),(q-t+1,q-1,2),(q-t+1,q+1,0),
(q-t,q-1,1),(q-t,q,0),
$$
respectively.
It follows that every two in \eqref{eq-4} can be distinguished by some element in $\{a_t,b_1,c_q\}$.
Based on the above proof, Claim B holds.

If $b=x$, then taking $r=x$, we deduce that
$\di(a,r)=t\ge 0=\di(b,r)$, as desired.
If $b=y$, then taking $r=c_q$, we conclude that
$\di(a,r)=q-t+1\ge 1=\di(b,r)$, as required.
In the following, we may assume that $b\notin \{x,y\}$.

Suppose, for a contradiction,
that for any $r\in \{x,b_1,c_q\}$, we all have
$\di(a,r)<\di(b,r)$. If $b=a_i$ for $1\le i \le q-1$, then for $r=x$ and $c_q$, we have that $t<i$ and $q-t+1<q-i+1$ respectively, a contradiction.
If $b=b_j$ for $1\le j \le q$, then for $r=x$ and $c_q$, we deduce that $t<j$ and $j<t+1$ respectively, which is impossible.
Finally, if $b=c_k$ for $1\le k \le q$, then for $r=x$ and $c_q$, we infer that $t<k$ and $k<t-1$ respectively, which is impossible.
$\qed$

Denote by $L(G)$ the set of all leaves of $G$.
A {\em block} of a graph is a maximal connected subgraph that has no cut-vertex.
Given a connected graph $G$ with at least a cut-vertex, we next define the graph $B(G)$ with vertex set $\mathcal{B}\cup C$, where
$\mathcal{B}$ is the set of all blocks of $G$, and $C$ is the set of all cut-vertices of $G$.
In fact, $B(G)$ is bipartite with bipartition $(\mathcal{B},C)$, and a block $B\in \mathcal{B}$ and a cut-vertex $v\in C$ is adjacent if and only
if $v\in V(B)$. Clearly, $B(G)$ is a tree, and so we call $B(G)$ the {\em block tree} of $G$
(cf. \cite{Jon18}).

For a connected graph $G$ with at least a cut-vertex,
let $\mathcal{F}$ be the set of all blocks containing a cycle.
For $D\in \mathcal{F}$, let $X_D$ be a mixed metric generator of $D$, and let $C_D$ be the set of the cut-vertices in $D$.

\begin{lemma}\label{lem-3}
Let $G$ be a connected graph with at least a cut-vertex. Then
$$
L(G)\cup(\bigcup_{D\in \mathcal{F}}X_D\setminus C_D)
$$
is a mixed metric generator of $G$.
\end{lemma}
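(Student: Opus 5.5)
The plan is to show that the set $S=L(G)\cup(\bigcup_{D\in \mathcal{F}}X_D\setminus C_D)$ can \emph{simulate} every vertex it omits, and then to check all pairs of elements of $O(G)$ by their position in the block tree $B(G)$.

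\textbf{Step 1 (every branch contains a vertex of $S$).} Let $c$ be a cut-vertex and $K$ a component of $G-c$. Then $K\cup\{c\}$ contains an end block $E$ of $G$, that is, a leaf of $B(G)$ other than a block containing $c$ alone as its attachment. If $E$ is a bridge, it contributes a leaf of $G$, which lies in $S$. Otherwise $E\in\mathcal{F}$ has exactly one cut-vertex. Any mixed metric generator of a $2$-connected graph has at least $3$ vertices (for cycles $\dm=3$, and a $2$-connected graph is at least as hard). Hence $X_E\setminus C_E\neq\emptyset$, and $K$ contains a vertex of $S$.

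\textbf{Step 2 (simulation).} If $w\in K$, then every shortest path from $w$ to a vertex outside $K$ passes through $c$. Hence for every element $z\in O(G)$ not in $K$ (vertex or edge, with the edge distance taken as the minimum over endpoints) we have $\di(w,z)=\di(w,c)+\di(c,z)$. Consequently, if $c$ distinguishes two elements outside $K$, so does $w$.

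\textbf{Step 3 (pairs inside one block).} Let $x,y\in O(D)$ for some block $D$.
\begin{itemize}
\item If $D\in\mathcal{F}$, some $r\in X_D$ distinguishes $x$ and $y$ in $D$. Distances in $D$ agree with distances in $G$, since blocks are isometric. If $r\notin S$, then $r$ is a cut-vertex, and by Steps 1 and 2 we replace $r$ by an $S$-vertex in a component of $G-r$ avoiding $D$.
\item If $D$ is a bridge $uv$, the elements are $u$, $v$ and $uv$. For these I will use one $S$-vertex on each side: from the $u$-side we get distances $(k,k+1,k)$ to $(u,v,uv)$, which separates $v$ from the other two. From the $v$-side we get $(m+1,m,m)$, which separates $u$ from the other two. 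Together these distinguish all three pairs.
\end{itemize}

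\textbf{Step 4 (pairs in no common block).} This is the step I expect to be the main obstacle. Suppose $x$ and $y$ lie in no common block. Then some cut-vertex $c$ separates them, in the sense that $x$ (or the edge $x$) lies in the closure of a component $K_x$ of $G-c$ and $y$ lies outside it.
\begin{itemize}
\item First try the $S$-vertices supplied by Step 1 on the far side of $x$, namely in branches at cut-vertices of the block containing $x$ that lead away from $y$, or a leaf or generator vertex of that block itself.
\item Symmetrically, try $S$-vertices on the far side of $y$.
\end{itemize}
Suppose $w_x$ on the far side of $x$ and $w_y$ on the far side of $y$ both fail. Then $\di(w_x,y)=\di(w_x,x)$ and $\di(w_y,x)=\di(w_y,y)$. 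Because geodesics from $w_x$ to $y$ pass through the cut-vertices separating $x$ from $y$, while those from $w_y$ to $x$ pass through them in the other direction, adding the two equalities should give a strict inequality contradiction along the block-tree path. This is the same "both sums cannot increase by two" trick used with \eqref{eq-3} in Lemma~\ref{lem-2}.

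The delicate sub-case is when $x$ or $y$ is itself a cut-vertex or an edge incident to one. There, "far side" must be chosen as a branch not containing the other element, and Step 1 guarantees such a branch contains an $S$-vertex. I would handle it by choosing $c$ to be the cut-vertex on the $B(G)$-path closest to $y$, and taking $w$ in the component of $G-c$ containing $x$. If $\di(w,x)<\di(w,c)\le \di(w,y)$, we are done directly. Otherwise $x$ is "beside" the geodesic from $w$ to $c$, and I would switch to a branch attached beyond $x$.
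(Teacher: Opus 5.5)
Your Steps 1--3 are fine. In Step 1 you only need $|X_E|\ge 2$, which is immediate because a vertex $v$ alone cannot separate $v$ from an edge incident to $v$; the appeal to being ``at least as hard'' as a cycle is unnecessary.

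The gap is in Step 4, which is where the content of the lemma lies. Your summation argument works when the block-tree path from the block $B_x$ of $x$ to the block $B_y$ of $y$ passes through two distinct cut-vertices $c_1\in B_x$ and $c_k\in B_y$. There the two failures give $\di(c_1,y)\le \di(c_1,x)$ and $\di(c_k,x)\le \di(c_k,y)$, and adding yields $2\di(c_1,c_k)\le 0$. When $B_x$ and $B_y$ meet in a single cut-vertex $c$, the same computation gives only $\di(c,x)=\di(c,y)$, which is no contradiction.

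Concretely, glue two $6$-cycles $(c,v_1,\dots,v_5)$ and $(c,u_1,\dots,u_5)$ at $c$. Take $X=\{c,v_2,v_4\}$, which is a mixed metric generator of $C_6$, and its copy on the second cycle, so $S=\{v_2,v_4,u_2,u_4\}$. For $x=v_1$ and $y=u_1$, the choices $w_x=v_4$ and $w_y=u_4$ both fail, because $c$ lies on a geodesic from each of them to the element on its own side. Only $v_2$ or $u_2$ works.

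This example also shows three further problems with your sketch:
\begin{itemize}
\item Here $x,y$ are neither cut-vertices nor edges, so you have misidentified the delicate sub-case.
\item Your fallback of switching to a branch beyond $x$ does not exist, since $B_x$ is an end block.
\item Your test $\di(w,x)<\di(w,c)$ is stronger than what is needed.
\end{itemize}

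The missing idea is the paper's Claim A. Assume $\di(c,x)\le \di(c,y)$. It suffices to find $w\in S$ on the $x$-side of $c$ with $\di(w,x)<\di(w,c)+\di(c,x)$, for then $\di(w,x)<\di(w,c)+\di(c,y)=\di(w,y)$.

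If $B_x\in\mathcal{F}$, such a $w$ exists in $X_{B_x}\setminus\{c\}$ precisely because $X_{B_x}$ is a mixed metric generator of $B_x$. Suppose every $w\in X_{B_x}\setminus\{c\}$ satisfied $\di(w,x)=\di(w,c)+\di(c,x)$. Take a shortest path $(c,v_1,\dots,v_r)$ from $c$ to $x$, ending at $x$ or at an endpoint of the edge $x$. Then the vertex $v_{r-1}$ and the edge $\{v_{r-1},v_r\}$ would be at equal distance from every vertex of $X_{B_x}$, including $c$. When $r=0$, the same holds for $c$ and the edge $x$. Either way this contradicts $X_{B_x}$ being a generator.

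If the $w$ so obtained is a cut-vertex, replace it by an $S$-vertex behind it using your Steps 1--2; this preserves the strict inequality. If $B_x$ is a bridge $\{c,u\}$, any $S$-vertex on the $u$-side works.

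With this lemma inserted, your outline becomes a complete proof. Your handling of distant pairs is then more explicit than the paper's informal gluing along the block tree.
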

\proof
If every $B\in \mathcal{B}$ has no cycle, then $G$ is a tree, and so $L(G)$ is a mixed metric generator of $G$, as desired.
In the following, assume that $\mathcal{F}\ne \emptyset$ and let $D\in \mathcal{F}$. Now let
$c$ be a cut-vertex belonging to $D$, and let $E\in \mathcal{B}\setminus \{D\}$ with $c\in E$.

\medskip
\noindent {\bf Claim A.} Suppose that $E$ belongs to $\mathcal{F}$. Then $(X_D\cup X_E)\setminus\{c\}$ is a mixed metric generator of $D\cup E$.
\medskip

If two distinct elements of $O(D)$ (resp. $O(E)$) are distinguished by $c$, then they must be distinguished by an element of $X_E\setminus\{c\}$ (resp. $X_D\setminus\{c\}$).
Now take
$z_1\in O(D)\setminus\{c\},z_2\in O(E)\setminus\{c\}.$
Without loss of generality, let
$\di_{D\cup E}(z_1,c)\le \di_{D\cup E}(z_2,c)$.

Suppose, for a contradiction, that for any $x\in X_D\setminus\{c\}$, we have that
\begin{equation}\label{eq-5}
\di_D(x,z_1)=\di_D(x,c)+\di_D(c,z_1).
\end{equation}
If $\di(c,z_1)=0$, then $z_1$ is an edge with an endpoint $c$, which implies that $c$ and $z_1$ cannot
be distinguished by any of $X_D\cup \{c\}$ by \eqref{eq-5}, a contradiction.
Now let $\di_D(c,z_1)=r>0$.
Choose a shortest path form $c$ to $v_{r}$ in $D$ as follows:
$$
(c,v_1,v_2,\ldots,v_{r-1},v_r),
$$
where if $z_1$ is a vertex, then $v_r=z_1$; otherwise,  $v_r$ is an endpoint of $z_1$. Then
\begin{equation}\label{eq-6}
\di_D(x,v_{r-1})=\di_D(x,c)+r-1=
\di_D(x,c)+\di_D(c,\{v_{r-1},v_r\})=
\di_D(x,\{v_{r-1},v_r\})
\end{equation}
and
\begin{equation}\label{eq-7}
\di_D(c,v_{r-1})=\di_D(c,\{v_{r-1},v_r\}).
\end{equation}
Naturally, if $v_{r-1}=c$, then both \eqref{eq-6} and \eqref{eq-7} hold.
It follows that, in this case, $\{v_{r-1},v_r\}$ and $v_{r-1}$ cannot
be distinguished by any of $X_D\cup \{c\}$ by \eqref{eq-5}, this contradicts that $X_D$ is a mixed metric generator of $D$.

We conclude that there exists $y\in X_D\setminus\{c\}$, such that
$
\di_D(y,z_1)\le \di_D(x,c)+\di_D(c,z_1).
$
It follows that
$$
\di_{D\cup E}(y,z_1)<\di_{D\cup E}(y,c)+\di_{D\cup E}(c,z_1)\le \di_{D\cup E}(y,c)+\di_{D\cup E}(c,z_2)
=\di_{D\cup E}(y,z_2),
$$
which means that $z_1$ and $z_2$ are distinguished by $y$. As a result, $(X_D\cup X_E)\setminus\{c\}$ is a mixed metric generator of $D\cup E$, so Claim A holds.

\medskip
\noindent {\bf Claim B.} Suppose that $E$ is an
edge $\{c,c'\}$ and $c'$ is a leaf. Then $(X_D\cup \{c'\})\setminus\{c\}$ is a mixed metric generator of $D\cup E$.
\medskip

Claim B easily follows from the proof of Claim A.

Now, suppose that $E$ is a bridge, say $\{c,c_1\}$, where $c_1$ is not a leaf. Then there exists $E_1\in \mathcal{B}$ such that $c_1\in V(E_1)$. Certainly, $E_1$ can also be a bridge, say $\{c_1,c_2\}$, where $c_2$ is not a leaf. Now let
every edge of this path $P=(c,c_1,c_2,\ldots,c_n)$ is a bridge, and let $E_n\in \mathcal{B}$ be either an
edge $\{c_n,c'\}$ and $c'$ is a leaf, or a block containing $c_n$ and belonging to $\mathcal{F}$.
If $E_n\in \mathcal{F}$, then it is similar to
Claim A, $(X_D\cup X_{E_n})\setminus\{c,c_n\}$ is a mixed metric generator of $D\cup E \cup P$.
If not, it is similar to
Claim B, $(X_D\cup \{c'\})\setminus\{c\}$ is a mixed metric generator of $D\cup E \cup P$.

Finally, by the block tree of $G$, we can ``glue" all the blocks of $G$ together at their cut vertices. As a result, $
L(G)\cup(\cup_{D\in \mathcal{F}}X_D\setminus C_D)
$
is a mixed metric generator of $G$, as desired.
$\qed$

Given a graph $G$, let
$$
\mathcal{S}=\{v\in V(G): \deg(v)\ge 3\}.
$$
A {\em thread} in a graph $G$ is a path whose internal vertices all have degree $2$ in $G$.
Let $P=(p_0,p_1,\ldots,p_l)$ be a thread of $G$.
We use $G-P^0$ to denote the subgraph of $G$ that is obtained by removing the internal vertices of this thread $P$ and keeping the endpoints $p_0$ and $p_l$. If $G-P^0$ is connected and $l<\di_{G-P^0}(p_0,p_l)$, then
$P$ is called a {\em short thread} of $G$. If $\deg(p_0)\ge 3$ and $\deg(p_l)\ge 3$, then this thread $P$ is called
{\em maximal}.

\begin{lemma}\label{lem-4}
Let $G$ be $2$-connected with $c(G)\ge 3$. Then $\dm(G)\le 2c(G)-1$.
\end{lemma}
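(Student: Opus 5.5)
The plan is to build on the result of Chen and Ma \cite{C}, which gives $\dm(G)\le 2c(G)$ for every $2$-connected graph that is not a cycle. I would re-derive that bound in a form where one vertex is visibly redundant once $c(G)\ge 3$. Because $G$ is $2$-connected, we have $\delta(G)\ge 2$. The identity $\sum_{v}(\deg(v)-2)=2c(G)-2$ holds, so $\mathcal{S}\neq\emptyset$. Moreover, $G$ decomposes into maximal threads whose endpoints lie in $\mathcal{S}$. The number of such threads is $|E(G)|-\sum_{v\notin\mathcal{S}}1=c(G)-1+|\mathcal{S}|$.

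I would proceed by induction on $c(G)$, using an ear decomposition. The base case is $c(G)=2$. Here Lemma~\ref{lem-1} says $G$ is a $\Theta$-graph, and the results for $\Theta$-graphs (Lemma~\ref{lem-2} in the balanced case, and \cite{Sed} otherwise) give a generator of size at most $4=2c(G)$. The inductive step is as follows.
\begin{itemize}
\item Choose a maximal thread $P=(p_0,\ldots,p_l)$ such that $G-P^0$ is still $2$-connected. Such a thread exists by the standard last-ear argument.
\item Then $c(G-P^0)=c(G)-1$.
\item Given a mixed metric generator $X$ of $G-P^0$, I would show that $X$ together with at most two vertices of $P$ is a mixed metric generator of $G$. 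A natural choice is the two neighbours $p_1,p_{l-1}$ of the endpoints, or a single internal vertex when $P$ is not short.
\item Distances inside $G-P^0$ can shrink only through $P$, and only when $P$ is short. A distance-comparison argument, in the spirit of Claim A of Lemma~\ref{lem-2}, should show that the added vertices restore distinguishability.
\end{itemize}

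This gives $\dm(G)\le\dm(G-P^0)+2$. Iterating from the base case yields $2c(G)$. To improve this to $2c(G)-1$ when $c(G)\ge 3$, I will show that in at least one step only one new vertex is needed, or else that the base $\Theta$-subgraph can be handled with $3$ vertices once further ears are attached.

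The key observation concerns non-short threads. If $P$ is not short, meaning $l\ge \di_{G-P^0}(p_0,p_l)$, then adding $P$ does not change distances between vertices of $G-P^0$. In that case a single internal vertex of $P$, together with $X$, should distinguish all new elements. The argument uses the formula $\di(x,[t,t+\varepsilon])=\min\{t+\di(x,p_0),\,l-t-\varepsilon+\di(x,p_l)\}$ as in \eqref{eq-1}. So it suffices to find, when $c(G)\ge 3$, an ear decomposition in which some ear is not short.

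Failing that, the plan is to use Lemma~\ref{lem-2}: its extra freedom, with $a\in\mathcal{R}$ prescribed and $\di(r,a)\ge\di(r,b)$, is exactly what allows a vertex of the base $\Theta$-graph to be the attachment point of the next ear. That vertex can then be reused, saving one. I expect this to be the main obstacle. Specifically, one must show that when $c(G)\ge3$ there is always either a non-short ear or a $\Theta$-subgraph and ear configuration where the prescribed vertex $a$ from Lemma~\ref{lem-2} becomes redundant. Unbalanced $\Theta$-subgraphs and all-short ears need a careful case analysis.

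I would first try taking $a,b$ to be the endpoints $p_0,p_l$ of the third ear. This choice should let the set $\mathcal{R}\setminus\{a\}$, together with two vertices on that ear, serve as a generator of size $4-1+2=5=2\cdot3-1$. The remaining ears would then be added at a cost of two each.
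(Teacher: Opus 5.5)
Your proposal does not yet prove the lemma. Beyond the known bound $\dm(G)\le 2c(G)$ from \cite{C}, the whole content of the lemma is saving one vertex, and you explicitly leave that step open.

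The framework you set up also makes that step hard to close. Because you insist that $G-P^0$ stay $2$-connected, you will in general have to peel off a \emph{short} ear, and your inequality $\dm(G)\le\dm(G-P^0)+2$ is unjustified there. When $l<\di_{G-P^0}(p_0,p_l)$, distances between elements of $G-P^0$ shrink. So $X$ may stop distinguishing pairs of $O(G-P^0)$ lying far from $P$, and nothing in your sketch shows that $p_1,p_{l-1}$ repair this.

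Your one-vertex extension for non-short ears is likewise only asserted. The extension result the paper relies on, \cite[Lemma~2.4]{C}, adds \emph{two} vertices: the endpoint $u$ and $w=p_{\lceil l/2\rceil}$. This is why Lemma~\ref{lem-2} is stated with $a\in\mathcal R$ and with a vertex $r$ satisfying $\di(r,a)\ge\di(r,b)$. Taking $a=u$ and $b=v$, the endpoint already lies in $\mathcal R$, so $\mathcal R\cup\{w\}$, of size $5$, suffices. Your variant ``$\mathcal R\setminus\{a\}$ plus two ear vertices'' is not this mechanism. In any case, Lemma~\ref{lem-2} gives you nothing when the third ear is short, since then $\mathcal R$ need not even distinguish the elements of $O(H)$ inside $G$.

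The missing idea is the paper's dichotomy on threads. If every thread of $G$ is short, then by \cite[Lemmas~2.3 and 2.5]{C} the set $\mathcal S$ of vertices of degree at least $3$ is already a mixed metric generator. Since $\delta(G)\ge 2$, $|\mathcal S|\le\sum_v(\deg(v)-2)=2c(G)-2$, and no induction is needed. Otherwise, remove a \emph{non-short} maximal thread. Distances in $H=G-P^0$ are then preserved and the two-vertex extension applies, at the price that $H$ may cease to be $2$-connected. The paper then inducts on $c(G)$:
\begin{itemize}
\item For $c(G)=3$, $H$ is an unbalanced $\Theta$-graph with $\dm(H)=3$, a balanced one handled by Lemma~\ref{lem-2} with $a=u$, or a leafless cactus with two cycles, whose generator is $\{u,v,x,y\}$ via Lemma~\ref{lem-3}.
\item For larger $c(G)$, a $2$-connected $H$ is handled by the induction hypothesis.
\item A non-$2$-connected $H$ is handled by Lemma~\ref{lem-3}, with block generators chosen so that some block contributes at most $2c(D)-1$. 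When every cyclic block is a cycle, it is handled instead by \cite[Corollary~10]{Sed21}.
\end{itemize}
Without the $\mathcal S$-argument, your plan has no tool for a graph with $c(G)=3$ all of whose threads are short, where every ear you could peel off is short.
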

\proof
Let $c(G)=n$.
If every thread of $G$ is short, it follows from \cite[Lemmas~2.3 and 2.5]{C} that
$\dm(G)\le |\mathcal{S}|\le 2n-2<2n-1$, as desired.
In the following, we always assume that $G$ has a maximal thread $P=(u=p_0,p_1,\ldots,p_l=v)$ such that $l\ge \di_H(u,v)$, where $H=G-P^0$. Then clearly, $H$ is connected and has no leaves. Thus, we have that
$$
c(H)=|E(H)|-|V(H)|+1=(|E(G)|-l)-(|V(G)|-(l-1))+1,
$$
namely,
\begin{equation}\label{eq-8}
n=c(G)=c(H)+1.
\end{equation}
To end our proof, we now proceed by induction on $n$.

Suppose first that $n=3$. Then $c(H)=2$
by \eqref{eq-8}.
Assume that $H$ is a $2$-connected graph. Then Lemma~\ref{lem-1} implies that $H$ is a $\Theta$-graph.
If $H$ is an unbalanced $\Theta$-graph, then
in view of \cite[Theorem 10]{Sed}, we obtain that
$\dm(H)=3$, which implies that
$\dm(G)\le \dm(H)+2\le 2n-1$ by \cite[Lemma~2.4]{C}, as required.
Next, assume that $H$ is a balanced $\Theta$-graph.
Then, it follows from Lemma~\ref{lem-2} that
there exists a mixed metric generator $\mathcal{R}$ of $H$
such that
$$
|\mathcal{R}|=4,~~u\in \mathcal{R},~~\di_H(r,u)\ge \di_H(r,v) \text{ for some $r\in \mathcal{R}$}.
$$
Note that $\dm(H)=4$ by \cite[Theorem 7]{Sed}.
Thus, $\mathcal{R}$ is a mixed metric generator of $H$ with the smallest cardinality. Now let
$w=p_{\lceil l/2\rceil}$. In view of \cite[Lemma~2.4]{C}, it is obvious that
$\mathcal{R}\cup \{u,w\}=\mathcal{R}\cup \{w\}$ is a mixed metric generator of $G$. It follows that
$\dm(G)\le 2n-1$, as desired.

Assume that $H$ is not $2$-connected.
Note that $H$ has no leaves and contains at least a cut-vertex. It follows from $c(H)=2$ that $H$ is a
cactus graph with exactly two cycles. Now let $C_u$ and $C_v$ be two distinct cycles with $u\in C_u$ and $v\in C_v$, and let $a_u$ and $a_v$ be cut-vertices with $a_u\in C_u$ and $a_v\in C_v$. Clearly, $u\ne a_u$ and $v\ne a_v$, but $a_u$ and $a_v$
are not necessarily distinct. For $C_u$,
taking a thread from $u$ to $a_u$ satisfying the hypotheses
of \cite[Lemma~2.4]{C}, we deduce that there exists $x\in V(C_u)\setminus\{u,a_u\}$ such that $\{u,a_u,x\}$ is a mixed metric generator of $C_u$. Similarly, we may assume that $\{v,a_v,y\}$ is a mixed metric generator of $C_v$.
It follows from Lemma~\ref{lem-3} that
$\{u,v,x,y\}$ is a mixed metric generator of $H$.
Write, now, $w=p_{\lceil l/2\rceil}$.
By \cite[Lemma~2.4]{C} again, we conclude that
$\{u,v,x,y\}\cup \{w\}$ is a mixed metric generator of $G$.
As a consequence, $\dm(G)\le 2n-1$, as desired.

Now, assume that the statement holds for
every $2$-connected graph whose cyclomatic number is at least $4$ and less than $n$.
We then prove this result holds for $n$.

\medskip
\noindent {\bf Case 1.} $H$ is $2$-connected.
\medskip

If $c(H)=2$, then $c(G)=3$ by \eqref{eq-8}, which implies that the required result holds by the above proof.
Suppose nest that $c(H)\ge 3$.
Note that $c(H)<n$.
Then by the induction hypothesis, we deduce that
$\dm(H)\le 2c(H)-1$. It follows from \cite[Lemma~2.4]{C} that
$\dm(G)\le \dm(H)+2$, which implies that
$$\dm(G)\le \dm(H)+2 \le 2c(H)-1+2=2c(H)+1=2n-1$$
by \eqref{eq-8}, as desired.

\medskip
\noindent {\bf Case 2.} $H$ is not $2$-connected.
\medskip

Then $H$ has at least a cut-vertex.
As a result, $H$ has at least two blocks containing a cycle.
Let $D$ be a block of $H$ that contains a cycle.
If $D$ is a cycle, then \cite[Lemma~4]{Sed21} implies that we can find a mixed metric generator $X_D$ containing a cut-vertex.
Suppose that $c(D)=2$. Then $D$ is a $\Theta$-graph by Lemma~\ref{lem-1}. If $D$ is unbalanced, then \cite[Theorem 10]{Sed} manes that $\dm(D)=2c(D)-1$;
Otherwise, by Lemma~\ref{lem-2} we may choose
a mixed metric generator $X_D$ containing a cut-vertex.
If $c(D)\ge 3$, since $D$ is $2$-connected and $c(D)<n$,
we have $\dm(D)\le 2c(D)-1$ by the induction hypothesis, which implies that we can take a mixed metric generator $X_D$ of $D$ that has size $2c(D)-1$.

\medskip
\noindent {\bf Case 2.1.} $H$ has a block $D'$ that contains a cycle but is not itself a cycle.
\medskip

In view of Lemma~\ref{lem-3}, we have that
\begin{equation}\label{eq-9}
\begin{aligned}
 \dm(H) & \le \Big|\bigcup_{D\in \mathcal{F}}(X_D\setminus C_D)\Big|\\
   & \le \Big|\bigcup_{D\in \mathcal{F}\setminus\{D'\}}(X_D\setminus C_D)\Big|+
   \big| X_{D'}\setminus C_{D'}\big|\\
   & \le \sum_{D\in \mathcal{F}\setminus\{D'\}}2c(D)+
   2c(D')-1\\
   & = 2c(H)-1.
\end{aligned}
\end{equation}
Now, from \cite[Lemma~2.4]{C}, it follows that
$$\dm(G)\le \dm(H)+2\le 2c(H)+1=2c(G)-1,$$
as desired.

\medskip
\noindent {\bf Case 2.2.} Every $D\in \mathcal{F}$ is a cycle.
\medskip

In this case we have that $H$ is a cactus graph.
By \cite[Corollary 10]{Sed21}, we conclude that $\dm(H)\le 2c(H)$ and the equality is attained
if and only if every cycle in $H$ has exactly one root vertex.
If $\dm(H)\le 2c(H)-1$, then the desired result follows
from \cite[Lemma~2.4]{C}. Suppose now that $\dm(H)=2c(H)$. Since $G$ is $2$-connected, we deduce that $H$ is a
cactus graph with exactly two cycles. As a result, we have that $c(H)=2$, which implies that $c(G)=3$, and the required result follows.
$\qed$

\section{An affirmative answer to Conjecture~\ref{conj-2}}

We are now ready to prove Conjecture~\ref{conj-2}.

\medskip

{\noindent \em Proof of Conjecture $\ref{conj-2}$.}
By \cite[Corollary 8]{Sed},
\eqref{th-eq-1} holds for a
balanced $\Theta$-graph $G$.
Moreover, it follows from \cite[Theorem 9]{Sed21} that
\eqref{th-eq-1} holds for a cactus graph in which every cycle has precisely one vertex of degree at least $3$.
Thus, the sufficiency follows.

We next prove the necessity.
Suppose that \eqref{th-eq-1} holds for graph $G$.
If $c(G)=0$, then $G$ is a tree, which implies that $G$ is a cactus graph in which every cycle has precisely one vertex of degree at least $3$ (note that a tree has no cycles), as desired. In the following, we always assume that $c(G)\ge 1$.

\medskip
\noindent {\bf Case 1.} $G$ has no cut-vertices.
\medskip

If $c(G)=1$, then $G$ is a cycle, which implies that
\eqref{th-eq-1} is not valid by
\cite[Theorem~3.8 and Proposition~4.1]{Kek}.
Suppose that $c(G)=2$. Then by Lemma~\ref{lem-1}, $G$ is a $\Theta$-graph. Furthermore, if $G$ is an unbalanced $\Theta$-graph, then by \cite[Theorem 10]{Sed} we have $\dm(G)=3$, which implies \eqref{th-eq-1} does not hold.
Otherwise, $G$ is a balanced $\Theta$-graph, as desired.

Now, suppose that $c(G)>2$. Then Lemma~\ref{lem-4} means that
$$\dm(G)\le 2c(G)-1<2c(G)\le \ell(G)+2c(G),$$
which is impossible.

\medskip
\noindent {\bf Case 2.} $G$ has at least a cut-vertex.
\medskip

If $G$ has a block that contains a cycle but is not itself a cycle, then by Lemma~\ref{lem-3}, it is similar to \eqref{eq-9}, we deduce that $\dm(G)\le 2c(G)-1$, which is impossible. It follows that every block of $G$ that contains a cycle is itself a cycle. As a result, $G$ is a cactus graph.
Moreover, by \cite[Theorem 9]{Sed21}, we see that $G$ is a cactus graph in which every cycle has precisely one vertex of degree at least $3$, as desired.
$\qed$

\section*{Conflict of interest}
The authors state no conflict of interest.

\end{CJK*}

\end{document}